\documentclass[11pt]{article}%
\usepackage[letterpaper, portrait, margin=1in, headheight=0pt]{geometry}
\usepackage{mathtools}
\usepackage{enumerate}
\usepackage{amsmath,amsthm,amssymb,amsfonts}
\usepackage{enumitem}
\usepackage{mathrsfs}
\usepackage{tikz-cd}
\usepackage{multicol}
\setlength{\columnsep}{1.0cm}
\usepackage[title]{appendix}
\usetikzlibrary{quotes,angles}

\DeclareMathOperator{\Hom}{Hom}

\DeclareMathOperator{\Inf}{Inf}
\DeclareMathOperator{\Tra}{Tra}
\DeclareMathOperator{\Res}{Res}
\DeclareMathOperator{\ima}{Im}

\newcommand{\F}{\mathbb{F}}

\newcommand{\inv}{^{-1}}

\newcommand{\cH}{\mathcal{H}}

\newcommand{\vv}{^{\vdash}}
\newcommand{\dd}{^{\dashv}}
\newcommand{\FR}{F\lozenge R + R\lozenge F}

\newtheorem{thm}{Theorem}[section]
\newtheorem{lem}[thm]{Lemma}

\newtheorem{cor}[thm]{Corollary}

\theoremstyle{definition}

\theoremstyle{remark}

\title{Multipliers and Covers of Perfect Diassociative Algebras}
\author{Erik Mainellis}
\date{}

\begin{document}

\maketitle

\begin{abstract}
    The paper concerns perfect diassociative algebras and their implications to the theory of central extensions. It is first established that perfect diassociative algebras have strong ties with universal central extensions. Then, using a known characterization of the multiplier in terms of a free presentation, we obtain a special cover for perfect diassociative algebras, as well as some of its properties. The subsequent results connect and build on the previous topics. For the final theorem, we invoke an extended Hochschild-Serre type spectral sequence to show that, for a perfect diassociative algebra, its cover is perfect and has trivial multiplier. This paper is part of an ongoing project to advance extension theory in the context of several Loday algebras.
\end{abstract}

\section{Introduction}
In 2001, Loday introduced associative dialgebras, or diassociative algebras, in the context of algebraic $K$-theory \cite{loday dialgebras}. They have since been found to have connections with algebraic topology, among other fields, and nilpotent dialgebras have been classified up to dimension 4 (see \cite{basri}). Explicitly, a diassociative algebra $L$ consists of a vector space equipped with two associative bilinear products, denoted $\dashv$ and $\vdash$, that satisfy
\begin{align}
    x\dashv (y\dashv z) = x\dashv (y\vdash z),\\
    (x\vdash y)\dashv z = x\vdash (y\dashv z),\\
    (x\dashv y)\vdash z = (x\vdash y)\vdash z.
\end{align}
for all $x,y,z\in L$.

Over the last century, a large volume of group-theoretic notions have been established in the context of Lie algebras, and many of these have also been generalized to Leibniz or other classes of algebras. In particular, the Schur multiplier originated in the context of group representation theory (see \cite{karp}), but has since been studied for Lie algebras \cite{batten}, for Leibniz algebras \cite{mainellis batten, rogers}, and for diassociative algebras \cite{mainellis batten di}. These works contain an extension-theoretic investigation of multipliers and covers, low-dimensional cohomology, and unicentral algebras. Although diassociative algebras are quite different than Lie algebras, many techniques used on Lie algebras have found success in the relatively new context of the former. In the present paper, we are interested in the subclass of perfect algebras, i.e. algebras that are equal to their derived ideal. The multipliers, covers, and universal central extensions related to perfect Lie algebras are known to have remarkable properties \cite{batten}. These, too, take their motivations from group theory \cite{karp}. Some were generalized to Hom-Leibniz algebras in \cite{casas}. The objective of the present paper is to investigate the same matters for diassociative algebras.

We use the following definitions throughout. Given a diassociative algebra $L$ with multiplications $\dashv$ and $\vdash$, consider two subalgebras $S$ and $T$ in $L$. We denote $S\lozenge T = S\dashv T + S\vdash T$ and say that $L$ is \textit{perfect} if $L= L'$, where $L' = L^2 = L\lozenge L$. Given another diassociative algebra $A$, an \textit{extension} of $A$ by $L$ is a short exact sequence $0\xrightarrow{} A\xrightarrow{\sigma} H\xrightarrow{\pi} L\xrightarrow{} 0$ for which $\sigma$ and $\pi$ are homomorphisms and $H$ is a diassociative algebra. In general, one may assume that $\sigma$ is the identity map, and we make this assumption throughout the paper. An extension is called \textit{central} if $A\subseteq Z(H)$. A \textit{section} of an extension is a linear map $\mu:L\xrightarrow{} H$ such that $\pi\circ \mu = \text{id}_L$.

The paper is structured as follows. In Section 2, we establish a series of lemmas that relate universal central extensions to perfect algebras. One such result states that, for any universal central extension $0\xrightarrow{} A\xrightarrow{} H\xrightarrow{} L\xrightarrow{} 0$, both $L$ and $H$ are perfect (Lemma \ref{batten 6.2}). Perhaps most notably, it is shown that, given a perfect diassociative algebra $L$, the extension $0\xrightarrow{} 0\xrightarrow{} L\xrightarrow{} L\xrightarrow{} 0$ is universal if and only if every central extension of $L$ splits. In Section 3, we study the multipliers and covers of finite-dimensional perfect diassociative algebras. Given such an $L$, and using the characterization of $M(L)$ in terms of a free presentation $0\xrightarrow{} R\xrightarrow{} F\xrightarrow{} L\xrightarrow{} 0$ of $L$ (as established in \cite{mainellis batten di}), we prove that $F'/(\FR)$ is a cover of $L$. In particular, and among other statements, it is also shown that the extension \[0\xrightarrow{} M(L)\cong \frac{F'\cap R}{\FR}\xrightarrow{} \frac{F'}{\FR}\xrightarrow{} L\xrightarrow{} 0\] is universal. Next, given a universal central extension $0\xrightarrow{} A\xrightarrow{} L^*\xrightarrow{} L\xrightarrow{} 0$ of a perfect algebra $L$, we obtain that $A\cong M(L)$ and that $L^*$ is a cover of $L$. We also explore what happens when $L$ has trivial multiplier. Finally, we use the extended low-dimensional Hochschild-Serre type spectral sequence, as established in \cite{mainellis batten di}, to prove that $C=C'$ and $M(C)=0$ for any cover $C$ of a finite-dimensional perfect diassociative algebra $L$.

\section{Universal Central Extensions}
The aim of this section is establish connections between perfect diassociative algebras and universal central extensions. Consider a finite-dimensional diassociative algebra $L$, an $L$-module $A$, and two central extensions $E:0\xrightarrow{} A\xrightarrow{} H\xrightarrow{} L\xrightarrow{} 0$ and $E_1:0\xrightarrow{} A_1\xrightarrow{} H_1\xrightarrow{} L\xrightarrow{} 0$. We say that $E$ \textit{covers} $E_1$ if there exists a homomorphism $\tau:H\xrightarrow{} H_1$ such that the diagram \[\begin{tikzcd}
E:0\arrow[r] &A\arrow[r] \arrow[,d] & H \arrow[d,"\tau"] \arrow[r] & L\arrow[r] \arrow["\text{id}",d]&0\\
E_1:0\arrow[r] &A_1\arrow[r,swap] & H_1 \arrow[r,swap] &L\arrow[r] &0
\end{tikzcd}\] commutes, where the unmarked map is $\tau|_A$. If $\tau$ is unique, then $E$ \textit{uniquely covers} $E_1$. An extension $E$ is \textit{universal} if it uniquely covers any central extension of $L$.

\begin{lem}\label{batten 6.1}
If $E:0\xrightarrow{} A\xrightarrow{} H\xrightarrow{\phi} L\xrightarrow{} 0$ and $E_1:0\xrightarrow{} A_1\xrightarrow{} H_1\xrightarrow{\phi_1} L\xrightarrow{} 0$ are universal central extensions of $L$, then there exists an isomorphism $H\xrightarrow{} H_1$ which carries $A$ onto $A_1$.
\end{lem}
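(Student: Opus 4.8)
The statement is the standard "uniqueness of universal central extensions" result, so the proof should mirror the classical argument from group theory and Lie algebra theory. The plan is to use the defining property of universality twice, in both directions, to produce mutually inverse homomorphisms.

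First I would apply the universality of $E$ to the central extension $E_1$: since $E$ uniquely covers every central extension of $L$, there is a (unique) homomorphism $\tau: H \to H_1$ making the ladder diagram commute, i.e. $\phi_1 \circ \tau = \phi$, and $\tau$ restricts to a map $A \to A_1$. Symmetrically, applying the universality of $E_1$ to the central extension $E$ yields a (unique) homomorphism $\rho: H_1 \to H$ with $\phi \circ \rho = \phi_1$ and $\rho(A_1) \subseteq A$. Composing, $\rho \circ \tau: H \to H$ satisfies $\phi \circ (\rho \circ \tau) = \phi_1 \circ \tau = \phi$, so $\rho \circ \tau$ is a morphism of the extension $E$ to itself covering the identity on $L$. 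But $\mathrm{id}_H$ is also such a morphism, and here is where I invoke the \emph{uniqueness} clause in the definition of universal: since $E$ uniquely covers $E$ (itself being a central extension of $L$), there is only one such self-map, forcing $\rho \circ \tau = \mathrm{id}_H$. The same argument with the roles of $E$ and $E_1$ swapped gives $\tau \circ \rho = \mathrm{id}_{H_1}$. Hence $\tau$ is an isomorphism with inverse $\rho$.

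It remains to check that $\tau$ carries $A$ onto $A_1$. We already have $\tau(A) \subseteq A_1$ and $\rho(A_1) \subseteq A$; applying $\tau = \rho^{-1}$ to the latter inclusion gives $A_1 \subseteq \tau(A)$, so $\tau(A) = A_1$. Alternatively, one observes that $A = \ker\phi$ and $A_1 = \ker\phi_1$ and that $\phi_1 \circ \tau = \phi$ together with $\tau$ being bijective forces $\tau(\ker\phi) = \ker\phi_1$.

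The only genuinely delicate point—the "main obstacle"—is making sure the \emph{uniqueness} hypothesis is used correctly: the argument collapses if one only knows $E$ covers every central extension but not that it does so uniquely, since then one cannot conclude $\rho \circ \tau = \mathrm{id}_H$. I would be explicit that both self-maps $\rho\circ\tau$ and $\mathrm{id}_H$ are valid covering morphisms $E \to E$ (this requires noting that $E$ is itself among the central extensions of $L$ that $E$ must uniquely cover), so uniqueness applies. Everything else is routine diagram chasing and requires no computation with the diassociative products themselves.
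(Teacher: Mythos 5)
Your proof is correct and follows essentially the same route as the paper: obtain $\tau$ and $\rho$ from universality in each direction, compose, and use the uniqueness of the covering morphism $E \to E$ to force $\rho\circ\tau = \mathrm{id}_H$ and $\tau\circ\rho = \mathrm{id}_{H_1}$. You are in fact slightly more careful than the paper in spelling out why $\tau(A) = A_1$ (via $A = \ker\phi$, $A_1 = \ker\phi_1$), which the paper merely asserts.
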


\begin{proof}
Since both extensions are universal and central, there exist homomorphisms $\tau:H\xrightarrow{} H_1$ and $\tau_1:H_1\xrightarrow{} H$ such that the diagrams \[\begin{tikzcd}
0\arrow[r] &A\arrow[r] \arrow[d] & H \arrow[d,swap,"\tau"] \arrow[r,"\phi"] & L\arrow[r] \arrow["\text{id}",d]&0\\
0\arrow[r] &A_1\arrow[r,swap] & H_1 \arrow[r,swap,"\phi_1"] &L\arrow[r] &0
\end{tikzcd} ~~~~~~~~~~ \begin{tikzcd}
0\arrow[r] &A_1\arrow[r] \arrow[d] & H_1 \arrow[d,swap,"\tau_1"] \arrow[r,"\phi_1"] & L\arrow[r] \arrow["\text{id}",d]&0\\
0\arrow[r] & A\arrow[r,swap] & H \arrow[r,swap, "\phi"] &L\arrow[r] &0
\end{tikzcd}\] commute. The mapping $\tau_1\circ \tau:H\xrightarrow{} H$ is then such that $\phi\circ \tau_1\circ \tau = \phi_1\circ \tau = \phi$. Since $\tau$ is unique, $\tau_1\circ \tau = \text{id}_H$. Similarly, $\tau\circ \tau_1 = \text{id}_{H_1}$. Therefore $\tau$ is an isomorphism $H\xrightarrow{} H_1$ and $\tau|_A$ maps $A$ onto $A_1$.
\end{proof}

\begin{lem}\label{batten 6.2}
If $E:0\xrightarrow{} A\xrightarrow{} H\xrightarrow{\phi} L\xrightarrow{} 0$ is a universal central extension, then both $H$ and $L$ are perfect.
\end{lem}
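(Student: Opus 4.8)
The plan is to prove that $H$ is perfect by contradiction, and then obtain the perfectness of $L$ for free from surjectivity of $\phi$.

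For the first part, suppose $H\neq H'$. Then $V := H/H'$ is a nonzero diassociative algebra on which both products vanish, and I would manufacture a central extension of $L$ that $E$ cannot cover uniquely. Concretely, set $H_1 = H\times V$ with componentwise products and define $\phi_1\colon H_1\xrightarrow{} L$ by $\phi_1(h,v)=\phi(h)$; this is a surjective homomorphism whose kernel $A\times V$ is central in $H_1$ since $A\subseteq Z(H)$ and $V$ has trivial multiplications, so $E_1\colon 0\xrightarrow{} A\times V\xrightarrow{} H_1\xrightarrow{\phi_1} L\xrightarrow{} 0$ is a central extension of $L$.

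Next I would exhibit two distinct homomorphisms $H\xrightarrow{} H_1$ compatible with the diagram from $E$ to $E_1$, namely $\tau_1(h)=(h,0)$ and $\tau_2(h)=(h,\,h+H')$. Both visibly satisfy $\phi_1\circ\tau_i=\phi$ and both carry $A$ into $A\times V$, so each realizes $E$ as a cover of $E_1$; but $\tau_1\neq\tau_2$ because $V\neq 0$. The one point that needs care — and the main obstacle — is checking that $\tau_2$ is a homomorphism for both $\dashv$ and $\vdash$: given $x,y\in H$, the element $x\dashv y$ lies in $H\lozenge H=H'$, so its class in $V$ is zero, while $\tau_2(x)\dashv\tau_2(y)=(x\dashv y,\ (x+H')\dashv(y+H'))=(x\dashv y,0)$ since products in $V$ vanish; the two sides agree, and likewise for $\vdash$. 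Having two distinct lifts contradicts the assumption that $E$ uniquely covers $E_1$, so $H=H'$.

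Finally, since $\phi$ is a surjective homomorphism we have $\phi(H\lozenge H)=\phi(H)\lozenge\phi(H)$, whence $L=\phi(H)=\phi(H')=\phi(H)\lozenge\phi(H)=L\lozenge L=L'$; thus $L$ is perfect as well.
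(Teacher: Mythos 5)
Your proposal is correct and takes essentially the same approach as the paper: the paper also forms the central extension $0\to A\times H/H'\to H\times H/H'\to L\to 0$ and compares the two lifts $h\mapsto (h,0)$ and $h\mapsto (h,h+H')$, concluding $H/H'=0$ from uniqueness (stated directly rather than by contradiction). Your check that the second lift is a homomorphism and your derivation of $L=L'$ from surjectivity of $\phi$ match the paper's computation $L'=(H'+A)/A=H/A=L$.
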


\begin{proof}
Consider the central extension $0\xrightarrow{} A\times H/H'\xrightarrow{} H\times H/H'\xrightarrow{\psi} L\xrightarrow{} 0$ where $\psi(a,b) = \phi(a)$ for $a\in H$ and $b\in H/H'$. Define homomorphisms $\tau_i:H\xrightarrow{} H\times H/H'$ for $i=1,2$ by $\tau_1(h) = (h,0)$ and $\tau_2(h) = (h,h+H/H')$. Then $\psi\circ \tau_1(h) = \psi(h,0) = \phi(h)$ and $\psi\circ \tau_2(h) = \psi(h,h+H/H') = \phi(h)$, which implies that $\psi\circ \tau_i = \phi$ for $i=1,2$. Since $E$ is universal, we have $\tau_1=\tau_2$. Thus $H/H' = 0$ and $H=H'$. One computes \[L' = (H/A)' = \frac{H'+A}{A} = \frac{H+A}{A} = H/A = L\] and therefore $H$ and $L$ are perfect.
\end{proof}

\begin{lem}\label{batten 6.3}
Let $E:0\xrightarrow{} A\xrightarrow{} H\xrightarrow{\phi} L\xrightarrow{} 0$ and $E_1:0\xrightarrow{} A_1\xrightarrow{} H_1\xrightarrow{\phi_1} L\xrightarrow{} 0$ be central extensions and suppose $H$ is perfect. Then $E$ covers $E_1$ if and only if $E$ uniquely covers $E_1$.
\end{lem}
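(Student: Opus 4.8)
The plan is to handle the trivial implication first and then the substantive one. If $E$ uniquely covers $E_1$, then in particular $E$ covers $E_1$, so that direction needs no argument. For the converse, I would start from the hypothesis that $E$ covers $E_1$, which furnishes a homomorphism $\tau:H\xrightarrow{} H_1$ with $\phi_1\circ\tau=\phi$, and then show that any such homomorphism is unique. It is worth recording at the outset that a homomorphism $\tau:H\xrightarrow{} H_1$ satisfying $\phi_1\circ\tau=\phi$ automatically carries $A=\ker\phi$ into $A_1=\ker\phi_1$, so this single equation is equivalent to commutativity of the entire defining diagram; uniqueness of $\tau$ is therefore exactly what ``uniquely covers'' demands.

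So suppose $\tau,\tau':H\xrightarrow{} H_1$ are homomorphisms with $\phi_1\circ\tau=\phi=\phi_1\circ\tau'$, and set $\delta=\tau-\tau'$, a linear map $H\xrightarrow{} H_1$. The first step is to note that $\phi_1\circ\delta=0$, whence $\ima\delta\subseteq\ker\phi_1=A_1\subseteq Z(H_1)$; in other words $\delta$ takes values in the center of $H_1$. The second step is to feed this into the two bilinear products: writing $\tau'(h_i)=\tau(h_i)-\delta(h_i)$ and expanding
\[
\tau'(h_1)\dashv\tau'(h_2)=\bigl(\tau(h_1)-\delta(h_1)\bigr)\dashv\bigl(\tau(h_2)-\delta(h_2)\bigr),
\]
every term containing some $\delta(h_i)$ vanishes because $\delta(h_i)\in Z(H_1)$ annihilates $H_1$, so $\tau'(h_1)\dashv\tau'(h_2)=\tau(h_1)\dashv\tau(h_2)$ and hence $\delta(h_1\dashv h_2)=\tau(h_1\dashv h_2)-\tau'(h_1\dashv h_2)=0$; the identical computation with $\vdash$ gives $\delta(h_1\vdash h_2)=0$.

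The final step is to observe that $\delta$, being linear and vanishing on every $h_1\dashv h_2$ and every $h_1\vdash h_2$, vanishes on their linear span $H\lozenge H=H'$, and perfectness of $H$ gives $H'=H$, so $\delta=0$ and $\tau=\tau'$. I do not anticipate a genuine obstacle here, as the argument is essentially a centrality-plus-perfectness computation; the only point needing care is the bookkeeping across both diassociative products together with the precise sense in which $A_1\subseteq Z(H_1)$ annihilates $H_1$ (on both sides, for both $\dashv$ and $\vdash$), since that is exactly what forces the cross terms to disappear.
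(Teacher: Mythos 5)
Your proposal is correct and follows essentially the same route as the paper: both arguments show that the difference of two covering homomorphisms lands in $A_1\subseteq Z(H_1)$, use centrality to kill the cross terms in the expansion of both products, and conclude equality on $H'=H$ by perfectness. Packaging the difference as a single map $\delta$ rather than pointwise elements is only a cosmetic variation.
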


\begin{proof}
The reverse direction is clear. In the forward direction, suppose $E$ covers $E_1$. Then there exists a homomorphism $\tau:H\xrightarrow{} H_1$ such that the diagram \[\begin{tikzcd}
0\arrow[r] &A\arrow[r] \arrow[,d] & H \arrow[d,swap,"\tau"] \arrow[r,"\phi"] & L\arrow[r] \arrow["\text{id}",d]&0\\
0\arrow[r] &A_1\arrow[r,swap] & H_1 \arrow[r,swap,"\phi_1"] &L\arrow[r] &0
\end{tikzcd}\] commutes. Suppose there is another homomorphism $\beta:H\xrightarrow{} H_1$ such that the diagram \[\begin{tikzcd}
0\arrow[r] &A\arrow[r] \arrow[,d] & H \arrow[d,swap,"\beta"] \arrow[r,"\phi"] & L\arrow[r] \arrow["\text{id}",d]&0\\
0\arrow[r] &A_1\arrow[r,swap] & H_1 \arrow[r,swap,"\phi_1"] &L\arrow[r] &0
\end{tikzcd}\] commutes. It remains to show that $\tau = \beta$. Let $x,y\in H$. Then \begin{align*}
    \phi_1(\beta(x) - \tau(x)) &= \phi_1(\beta(x)) - \phi_1(\tau(x)) \\ &= \phi(x) - \phi(x) = 0
\end{align*} implies that $\beta(x) - \tau(x) \in \ker \phi_1 = A_1 \subseteq Z(H_1)$. Similarly, one obtains $\beta(y) - \tau(y)\in Z(H_1)$, and so $\beta(x) = \tau(x) + a$ and $\beta(y) = \tau(y) + b$ for some $a,b\in Z(H_1)$. We compute \begin{align*}
    \beta(x\dashv y) &= \beta(x)\dashv \beta(y) \\ &= \tau(x)\dashv \tau(y) + \tau(x)\dashv b + a\dashv \tau(y) + a\dashv b \\ &= \tau(x\dashv y)
\end{align*} since $a,b\in Z(H_1)$. Similarly, $\beta(x\vdash y) = \tau(x\vdash y)$, and thus $\tau$ and $\beta$ are equal on $H'$. Since $H$ is perfect, we have $\tau = \beta$.
\end{proof}

An extension $0\xrightarrow{} A\xrightarrow{} H\xrightarrow{\phi} L\xrightarrow{} 0$ \textit{splits} if there is a homomorphism $\mu:L\xrightarrow{} H$ that is also a section, i.e. that satisfies $\phi\circ \mu = \text{id}_L$.

\begin{lem}\label{batten 6.4}
Let $L$ be a finite-dimensional perfect diassociative algebra. Then the extension $E:0\xrightarrow{} 0\xrightarrow{} L\xrightarrow{} L\xrightarrow{} 0$ is universal if and only if every central extension of $L$ splits.
\end{lem}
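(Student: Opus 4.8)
The plan is to prove the two implications separately, using Lemma \ref{batten 6.3} to promote "covers" to "uniquely covers" in the converse direction; the perfectness of $L$ enters only through that lemma.

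For the forward direction, assume $E$ is universal and let $E_1:0\xrightarrow{} A_1\xrightarrow{} H_1\xrightarrow{\phi_1} L\xrightarrow{} 0$ be an arbitrary central extension of $L$. By universality, $E$ covers $E_1$, so there is a homomorphism $\tau:L\xrightarrow{} H_1$ such that the defining diagram commutes. Commutativity of the right-hand square is exactly the identity $\phi_1\circ \tau = \text{id}_L$, so $\tau$ is simultaneously a homomorphism and a section; hence $E_1$ splits. Since $E_1$ was arbitrary, every central extension of $L$ splits.

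For the converse, assume every central extension of $L$ splits. First note that $E$ is itself central, since $0\subseteq Z(L)$. Now take any central extension $E_1:0\xrightarrow{} A_1\xrightarrow{} H_1\xrightarrow{\phi_1} L\xrightarrow{} 0$. By hypothesis there is a homomorphism $\mu:L\xrightarrow{} H_1$ with $\phi_1\circ \mu = \text{id}_L$. Setting $\tau = \mu$, the left square of the covering diagram commutes trivially (the relevant map is $0\xrightarrow{} A_1$) and the right square commutes because $\phi_1\circ \mu = \text{id}_L = \text{id}_L\circ \text{id}_L$; thus $E$ covers $E_1$. Since the middle term of $E$ is $L$, which is perfect, Lemma \ref{batten 6.3} applies and gives that $E$ uniquely covers $E_1$. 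As $E_1$ was an arbitrary central extension of $L$, the extension $E$ is universal.

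I do not anticipate a real obstacle. The only subtlety worth flagging is that a splitting section need not be unique, so "covers" does not on its own give "uniquely covers"; it is precisely the perfectness of $L$, invoked through Lemma \ref{batten 6.3}, that supplies the missing uniqueness. The remaining steps are just bookkeeping with the commutative diagrams.
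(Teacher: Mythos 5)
Your proof is correct and follows essentially the same route as the paper: universality immediately yields a splitting homomorphism in the forward direction, and in the converse the splitting section provides a covering map which Lemma \ref{batten 6.3} (applied using the perfectness of $L$ as the middle term of $E$) upgrades to a unique covering. The subtlety you flag about uniqueness is exactly where the paper also invokes that lemma.
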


\begin{proof}
In the forward direction, let $E_1:0\xrightarrow{} A\xrightarrow{} H\xrightarrow{\phi} L\xrightarrow{} 0$ be a central extension of $L$. Then there exists a unique homomorphism $\tau:L\xrightarrow{} H$ such that the diagram \[\begin{tikzcd}
E:0\arrow[r] &0\arrow[r] \arrow[,d] & L \arrow[d,swap,"\tau"] \arrow[r,"\text{id}"] & L\arrow[r] \arrow["\text{id}",d]&0\\
E_1:0\arrow[r] &A\arrow[r,swap] & H \arrow[r,swap,"\phi"] &L\arrow[r] &0
\end{tikzcd}\] commutes. Therefore $\phi\circ \tau = \text{id}_L$, which implies that $E_1$ splits.

Conversely, suppose every central extension of $L$ splits and let $E_1:0\xrightarrow{} A\xrightarrow{} H\xrightarrow{\phi} L\xrightarrow{} 0$ be a central extension. Then there exists a homomorphism $\beta:L\xrightarrow{} H$ such that $\phi\circ \beta = \text{id}_L$, which implies that the diagram \[\begin{tikzcd}
E:0\arrow[r] &0\arrow[r] \arrow[,d] & L \arrow[d,swap,"\beta"] \arrow[r,"\text{id}"] & L\arrow[r] \arrow["\text{id}",d]&0\\
E_1:0\arrow[r] &A\arrow[r,swap] & H \arrow[r,swap,"\phi"] &L\arrow[r] &0
\end{tikzcd}\] commutes. Thus $E$ covers $E_1$. Since $L$ is perfect, Lemma \ref{batten 6.3} guarantees that $E$ uniquely covers $E_1$, and therefore $E$ is universal.
\end{proof}

\begin{lem}\label{batten 6.6}
Let $E_1:0\xrightarrow{} B\xrightarrow{} G\xrightarrow{\phi} L\xrightarrow{} 0$ and $E_2:0\xrightarrow{} C\xrightarrow{} L\xrightarrow{\psi} H\xrightarrow{} 0$ be central extensions and let $\pi = \psi\circ \phi$ and $A=\ker \pi$. If $G$ is perfect, then $E_3:0\xrightarrow{} A\xrightarrow{} G\xrightarrow{\pi} H\xrightarrow{} 0$ is a central extension.
\end{lem}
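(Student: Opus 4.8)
The plan is to separate the two things that ``$E_3$ is a central extension'' requires: that the sequence $0\xrightarrow{} A\xrightarrow{} G\xrightarrow{\pi} H\xrightarrow{} 0$ is exact, and that $A\subseteq Z(G)$. The first is formal: $\pi=\psi\circ\phi$ is a composite of surjective homomorphisms, hence a surjective homomorphism, and $A=\ker\pi$ by definition, so with the inclusion $A\hookrightarrow G$ as the first map the sequence is short exact. Thus all the content lies in showing $A\subseteq Z(G)$, and this is where perfectness of $G$ enters.

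For that, I would first pin down $A$. An element $a\in G$ lies in $A=\ker(\psi\circ\phi)$ precisely when $\phi(a)\in\ker\psi=C$, and $C\subseteq Z(L)$ because $E_2$ is central. Hence for any $g\in G$ we have $\phi(a\dashv g)=\phi(a)\dashv\phi(g)=0$, and similarly $\phi(a\vdash g)=\phi(g\dashv a)=\phi(g\vdash a)=0$; so every product of $a$ with a single element of $G$ lies in $\ker\phi=B$, which is contained in $Z(G)$ because $E_1$ is central. The key intermediate fact is therefore: $a\lozenge g\subseteq Z(G)$ and $g\lozenge a\subseteq Z(G)$ for every $g\in G$.

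Now I would invoke perfectness: $G=G'=G\lozenge G$, so an arbitrary $x\in G$ is a finite sum of terms of the form $g_1\dashv g_2$ and $g_1\vdash g_2$. Since the maps $a\dashv(-)$, $a\vdash(-)$, $(-)\dashv a$, $(-)\vdash a$ are linear, it suffices to evaluate them on such products, and in each of the eight resulting patterns one uses either associativity of one of the products or one of the defining identities (1)--(3) to move a factor of the form $a\lozenge g_i$ (or $g_i\lozenge a$) to the outside; that factor is central by the previous step, so the whole expression vanishes. For instance $a\dashv(g_1\dashv g_2)=(a\dashv g_1)\dashv g_2=0$, $a\dashv(g_1\vdash g_2)=a\dashv(g_1\dashv g_2)=0$ by $(1)$, $a\vdash(g_1\dashv g_2)=(a\vdash g_1)\dashv g_2=0$ by $(2)$, $(g_1\dashv g_2)\vdash a=(g_1\vdash g_2)\vdash a=g_1\vdash(g_2\vdash a)=0$ by $(3)$, and the remaining cases are analogous. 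Summing over the terms of $x$ yields $a\lozenge x=x\lozenge a=0$, i.e. $a\in Z(G)$; hence $A\subseteq Z(G)$ and $E_3$ is central. The only real obstacle is the bookkeeping of matching each of the eight product patterns to the correct instance of $(1)$--$(3)$ or associativity — routine, but it must be done carefully — and it is worth noting that finite-dimensionality is not actually needed for this lemma.
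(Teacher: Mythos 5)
Your proof is correct and follows essentially the same route as the paper's: first show that all single products $a\lozenge g$ and $g\lozenge a$ land in $\ker\phi=B\subseteq Z(G)$ because $\phi(a)\in\ker\psi\subseteq Z(L)$, then use $G=G\lozenge G$ to write any element as a sum of products and kill each of the eight patterns via associativity or the axioms (1)--(3). Your added remarks (exactness of the sequence being formal, and finite-dimensionality being unnecessary) are accurate but do not change the substance.
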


\[\begin{tikzcd}
E_3 &&& E_1 \\ &0\arrow[rd] & & 0 \arrow[d] && \\ & &A \arrow[rd] & B \arrow[d] & \\ &&& G\arrow[rd, "\pi"] \arrow[d, swap, "\phi"] & \\ E_2 &0\arrow[r] &C \arrow[r] &L\arrow[r, swap, "\psi"]\arrow[d] & H\arrow[r]\arrow[rd] &0 \\ &&& 0 & & 0
\end{tikzcd}\]

\begin{proof}
For any $a\in A = \ker \pi$, one has $\psi\circ \phi(a) = \pi(a) =0$. Therefore, for any $x\in G$, $\phi(a\dashv x) = \phi(a)\dashv \phi(x) = 0$ since $\phi(a)\in \ker \psi$. Similarly, $\phi(a\dashv x)$, $\phi(x\dashv a)$, and $\phi(x\vdash a)$ are all zero. For $a\in A$, let $\lambda_a\dd$, $\lambda_a\vv$, $\rho_a\dd$, $\rho_a\vv$ be adjoint operators on $G$ defined by $\lambda_a^*(x) = a* x$ and $\rho_a^*(x) = x* a$, where $*$ ranges over $\dashv$ and $\vdash$. Since $\phi(\lambda_a^*(x)) = 0$ and $\phi(\rho_a^*(x)) = 0$, we get $\lambda_a^*(x),\rho_a^*(x)\in \ker \phi \subseteq Z(G)$. Now let $y,z\in G$. Then \begin{align*}
    \lambda_a\dd(y\dashv z) = a\dashv (y\dashv z) \overset{as}{=} (a\dashv y)\dashv z = 0,\\ \lambda_a\dd(y\vdash z) = a\dashv (y\vdash z) \overset{(1)}{=} a\dashv(y\dashv z) \overset{as}{=} (a\dashv y)\dashv z = 0,\\ \lambda_a\vv(y\dashv z) = a\vdash (y\dashv z) \overset{(2)}{=} (a\vdash y)\dashv z = 0,\\ \lambda_a\vv(y\vdash z) = a\vdash (y\vdash z) \overset{as}{=} (a\vdash y)\vdash z = 0,\\ ~ \\ \rho_a\dd(y\dashv z) = (y\dashv z)\dashv a \overset{as}{=} y\dashv(z\dashv a) = 0,\\ \rho_a\dd(y\vdash z) = (y\vdash z)\dashv a \overset{(2)}{=} y\vdash(z\dashv a) = 0,\\ \rho_a\vv(y\dashv z) = (y\dashv z)\vdash a \overset{(3)}{=} (y\vdash z)\vdash a \overset{as}{=} y\vdash(z\vdash a) = 0,\\ \rho_a\vv(y\vdash z) = (y\vdash z)\vdash a \overset{as}{=} y\vdash(z\vdash a) = 0
\end{align*} since $a\dashv y, a\vdash y, z\dashv a, z\vdash a\in Z(G)$. Therefore $\lambda_a^*$ and $\rho_a^*$ are trivial maps on $G'=G$, and so $a\in Z(G)$, which implies that $E_3$ is central.
\end{proof}

\begin{lem}
Let $E_1$, $E_2$, $E_3$, and involved maps be as in Lemma \ref{batten 6.6}. If $E_1$ is universal, then so is $E_3$.
\end{lem}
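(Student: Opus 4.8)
The plan is to prove directly that $E_3$ uniquely covers an arbitrary central extension of $H$, by pulling that extension back along $\psi$ and feeding it into the universal property of $E_1$. Two facts come for free at the outset: since $E_1$ is universal, Lemma \ref{batten 6.2} gives that $G$ is perfect, so Lemma \ref{batten 6.6} indeed applies to show that $E_3:0\xrightarrow{} A\xrightarrow{} G\xrightarrow{\pi} H\xrightarrow{} 0$ is a central extension. The perfectness of $G$ will also be used at the end, via Lemma \ref{batten 6.3}, to reduce uniqueness of a covering map to its mere existence.

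So let $E':0\xrightarrow{} D\xrightarrow{} K\xrightarrow{\theta} H\xrightarrow{} 0$ be any central extension of $H$. Form the pullback of $E'$ along $\psi$: set $P=\{(l,k)\in L\times K : \psi(l)=\theta(k)\}$, which is a subalgebra of the direct product $L\times K$ under componentwise operations, hence a diassociative algebra; let $p:P\xrightarrow{} L$ and $q:P\xrightarrow{} K$ be the two projections. One checks routinely that $p$ is surjective (because $\theta$ is), that $\ker p=\{0\}\times D\cong D$, and that $D\subseteq Z(K)$ forces $\{0\}\times D\subseteq Z(P)$; thus $0\xrightarrow{} D\xrightarrow{} P\xrightarrow{p} L\xrightarrow{} 0$ is a central extension of $L$. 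Since $E_1$ is universal, there is a homomorphism $\tau:G\xrightarrow{} P$ with $p\circ\tau=\phi$.

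Now define $\sigma=q\circ\tau:G\xrightarrow{} K$. By construction of $P$ we have $\theta\circ q=\psi\circ p$, so $\theta\circ\sigma=\psi\circ p\circ\tau=\psi\circ\phi=\pi$; and for $a\in A=\ker\pi$ we get $\theta(\sigma(a))=\pi(a)=0$, so $\sigma(A)\subseteq\ker\theta=D$. Hence $\sigma$ makes the defining diagram commute, i.e.\ $E_3$ covers $E'$. Because $G$ is perfect, Lemma \ref{batten 6.3} upgrades this to the statement that $E_3$ \emph{uniquely} covers $E'$. Since $E'$ was an arbitrary central extension of $H$, $E_3$ is universal.

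The only mild obstacle I anticipate is the bookkeeping for the pullback $P$: checking that the componentwise operations make $P$ a diassociative algebra, that $\ker p$ is correctly identified with $D$, and that this kernel is central in $P$. This is routine and mirrors the familiar constructions in the Lie and Leibniz settings; all of the genuine content sits in the universal property of $E_1$ together with Lemmas \ref{batten 6.2}, \ref{batten 6.3}, and \ref{batten 6.6}.
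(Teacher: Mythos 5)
Your proof is correct and follows essentially the same route as the paper: pull the given central extension of $H$ back along $\psi$ to a central extension of $L$, apply the universal property of $E_1$ to get a map into the pullback, compose with the projection onto the other factor, and invoke Lemma \ref{batten 6.3} with the perfectness of $G$ to get uniqueness. You are in fact slightly more careful than the paper in verifying that the pullback extension is central and that the induced map sends $A$ into $D$.
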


\begin{proof}
Suppose $E_1:0\xrightarrow{} B\xrightarrow{} G\xrightarrow{\phi} L\xrightarrow{} 0$ is a universal extension. By Lemma \ref{batten 6.2}, $G$ and $L$ are perfect. Since $H$ is the homomorphic image of $G$, $H$ is also perfect. Let $E_4:0\xrightarrow{} D\xrightarrow{} S\xrightarrow{\mu} H\xrightarrow{} 0$ be another central extension of $H$. Let $T=\{(a,b)\in L\times S~|~ \psi(a) = \mu(b)\}$ and define multiplications on $T$ by $(a,b)\dashv(c,d) = (a\dashv c,b\dashv d)$ and $(a,b)\vdash (c,d) = (a\vdash c,b\vdash d)$. Then $T$ is closed under multiplication since $\psi(a\dashv c) = \psi(a)\dashv\psi(c) = \mu(b)\dashv\mu(d) = \mu(b\dashv d)$ and $\psi(a\vdash c) = \mu(b\vdash d)$ similarly. Thus $T$ is a subalgebra of $L\times S$. Let $\lambda$ be the projection of $T$ onto $L$. Since $E_1$ is universal, there exists a unique homomorphism $\alpha:G\xrightarrow{} T$ such that the diagram \[\begin{tikzcd}
E_1:0\arrow[r] &B\arrow[r] \arrow[,d] & G \arrow[d,swap,"\alpha"] \arrow[r,"\phi"] & L\arrow[r] \arrow["\text{id}",d]&0\\
0\arrow[r] &0\times D\arrow[r,swap] & T \arrow[r,swap,"\lambda"] &L\arrow[r] &0
\end{tikzcd}\] commutes, i.e. $\lambda\circ \alpha = \phi$. Let $\gamma:T\xrightarrow{} S$ be the natural projection given by $\lambda(a,b) = b$. Let $\beta=\gamma\circ\alpha$. Given $g\in G$, set $\alpha(g) = (a,b)$. Then $\beta(g) = \gamma\circ\alpha(g) = \gamma(a,b) = b$ and $\phi(g) = \lambda\circ \alpha(g) = \lambda(a,b) = a$, which implies that $(\mu\circ \beta)(g) = \mu(b) = \psi(a) = \psi\circ\phi(g) = \pi(g)$. Thus the diagram \[\begin{tikzcd}
E_3:0\arrow[r] &A\arrow[r] \arrow[,d] & G \arrow[d,swap,"\beta"] \arrow[r,"\pi"] & H\arrow[r] \arrow["\text{id}",d]&0\\
E_4:0\arrow[r] & D\arrow[r,swap] & S \arrow[r,swap,"\mu"] &H\arrow[r] &0
\end{tikzcd}\] commutes, and so $E_3$ covers $E_4$. Since $G$ is perfect, we know that $E_3$ uniquely covers $E_4$. Therefore, $E_3$ is universal.
\end{proof}

The proof of the following lemma is similar to the Lie case (Lemma 6.7 in \cite{batten}), but we provide it here for the sake of completeness and to detail how it fits in with the previous lemmas.

\begin{lem}\label{batten 6.7}
Let $L$ be a finite-dimensional perfect diassociative algebra. If $0\xrightarrow{} 0\xrightarrow{} H\xrightarrow{\phi} L\xrightarrow{} 0$ is a universal extension, then so is $0\xrightarrow{} 0\xrightarrow{} L\xrightarrow{\emph{id}} L\xrightarrow{} 0$.
\end{lem}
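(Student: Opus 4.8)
The plan is to first observe that $\phi$ must be an isomorphism: exactness at $H$ forces $\ker\phi = 0$, and exactness at $L$ forces $\phi$ surjective, so $\phi^{-1}$ exists. Consequently the two extensions in question are isomorphic as extensions of $L$ via the middle vertical map $\phi$, and the strategy is simply to transport universality across this isomorphism.

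Concretely, I would invoke Lemma \ref{batten 6.4}, which says that $0\xrightarrow{} 0\xrightarrow{} L\xrightarrow{\text{id}} L\xrightarrow{} 0$ is universal precisely when every central extension of $L$ splits. So let $E_1:0\xrightarrow{} A\xrightarrow{} G\xrightarrow{\psi} L\xrightarrow{} 0$ be an arbitrary central extension of $L$. Since $0\xrightarrow{} 0\xrightarrow{} H\xrightarrow{\phi} L\xrightarrow{} 0$ is universal, it covers $E_1$, so there is a homomorphism $\tau:H\xrightarrow{} G$ with $\psi\circ\tau=\phi$. Then $\tau\circ\phi^{-1}:L\xrightarrow{} G$ is a homomorphism satisfying $\psi\circ(\tau\circ\phi^{-1}) = \phi\circ\phi^{-1} = \text{id}_L$, i.e. a section that is also a homomorphism, so $E_1$ splits. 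As $L$ is finite-dimensional and perfect, Lemma \ref{batten 6.4} then gives that $0\xrightarrow{} 0\xrightarrow{} L\xrightarrow{\text{id}} L\xrightarrow{} 0$ is universal.

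Alternatively, one can argue directly from the definition without Lemma \ref{batten 6.4}: for any central extension $E_1$ of $L$, the composite of the unique covering map $\tau:H\xrightarrow{} G$ with $\phi^{-1}$ is a covering map $L\xrightarrow{} G$; and if $\beta:L\xrightarrow{} G$ were another covering map, then $\beta\circ\phi:H\xrightarrow{} G$ would be a covering map of $E_1$ by $H$, forcing $\beta\circ\phi=\tau$ and hence $\beta=\tau\circ\phi^{-1}$. Thus the identity extension uniquely covers $E_1$, so it is universal.

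There is no substantial obstacle: the argument is a short diagram chase once one notes that $\phi$ is invertible. The only point needing a moment's care is the uniqueness of the covering map, which is handled either by inheriting the unique covering property through $\phi$, or equivalently by applying Lemma \ref{batten 6.3} with the perfect algebra $H$ (which is perfect by Lemma \ref{batten 6.2}, or simply because $H\cong L$ is perfect).
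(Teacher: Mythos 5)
Your proposal is correct and follows essentially the same route as the paper's proof: note that $\phi$ is an isomorphism, compose the covering map from the universal extension with $\phi^{-1}$ to get a splitting homomorphism for an arbitrary central extension, and then apply Lemma \ref{batten 6.4}. The alternative direct argument you sketch is also fine, but the main line of reasoning is the paper's.
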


\begin{proof}
Let $0\xrightarrow{} A\xrightarrow{} L^*\xrightarrow{\psi} L\xrightarrow{} 0$ be an arbitrary central extension of $L$. Then there exists a unique homomorphism $\theta:H\xrightarrow{} L^*$ such that the diagram \[\begin{tikzcd}
0\arrow[r] &0\arrow[r] \arrow[,d] & H \arrow[d,swap,"\theta"] \arrow[r,"\phi"] & L\arrow[r] \arrow["\text{id}",d]&0\\
0\arrow[r] &A\arrow[r,swap] & L^* \arrow[r,swap,"\psi"] &L\arrow[r] &0
\end{tikzcd}\] commutes. In other words, $\phi = \psi\circ \theta$. However, since $\phi$ is an isomorphism, let $\beta = \theta\circ \phi\inv$ be the homomorphism from $L$ to $L^*$. One computes $\psi\circ \beta = \psi\circ\theta\circ\phi\inv = \phi\circ \phi\inv = \text{id}_L$, which means $\beta$ is a section of $\psi$ that is also a homomorphism. Thus, our arbitrary central extension $0\xrightarrow{} A\xrightarrow{} L^*\xrightarrow{\psi} L\xrightarrow{} 0$ splits. By Lemma \ref{batten 6.4}, and since $L$ is perfect, we know that $0\xrightarrow{} 0\xrightarrow{} L\xrightarrow{} L\xrightarrow{} 0$ is universal.
\end{proof}

\section{Multipliers and Covers}
We now examine covers and multipliers of perfect diassociative algebras. Recall that a \textit{definining pair} $(K,M)$ of a diassociative algebra $L$ is itself a pair of diassociative algebras that satisfies $K/M\cong L$ and $M\subseteq Z(K)\cap K'$. Such a pair is called a \textit{maximal defining pair} if the dimension of $K$ is maximal. In this case, we say that $K$ is a \textit{cover} of $L$ and that $M$ is the \textit{multiplier} of $L$, denoted by $M(L)$. In \cite{mainellis batten di}, it is shown that \[M(L) = \frac{F'\cap R}{\FR}\] where $0\xrightarrow{} R\xrightarrow{} F\xrightarrow{} L\xrightarrow{} 0$ is a free presentation of $L$. In the same paper, it is also shown that $M(L)\cong \cH^2(L,\F)$.

Let $L$ be a perfect diassociative algebra with free presentation $0\xrightarrow{} R\xrightarrow{} F\xrightarrow{} L\xrightarrow{} 0$ and consider the natural extension \[0\xrightarrow{} \frac{R}{\FR}\xrightarrow{} \frac{F}{\FR} \xrightarrow{\pi} L\xrightarrow{} 0.\] Since $L'$ is perfect, we compute \begin{align*}
    L' &= \ima\Big(\frac{F}{\FR}\Big)' \\ &\cong \ima\Big(\frac{F' + \FR}{\FR} \Big)\\ &= \ima\Big(\frac{F'}{\FR}\Big)
\end{align*} which implies that the restriction $\pi|_{F'/(\FR)}$ induces a central extension \[0\xrightarrow{} \frac{F'\cap R}{\FR} \xrightarrow{} \frac{F'}{\FR} \xrightarrow{} L\xrightarrow{} 0.\] We know that the algebra $(F'\cap R)/(\FR)$ is precisely the multiplier $M(L)$. Our goal is now to show that $F'/(\FR)$ is a cover of $L$ and that the above extension is universal.

\begin{thm}\label{batten 6.8}
Let $L$ be a finite-dimensional perfect diassociative algebra and $0\xrightarrow{} R\xrightarrow{} F\xrightarrow{} L\xrightarrow{} 0$ be a free presentation of $L$. Then $F'/(\FR)$ is a cover of $L$.
\end{thm}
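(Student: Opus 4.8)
The plan is to verify the two defining conditions of a cover directly from the construction, and then prove maximality of $\dim F'/(\FR)$ among all defining pairs of $L$. First I would set $C = F'/(\FR)$ and $M = (F'\cap R)/(\FR) \cong M(L)$. The preceding discussion already supplies a central extension $0\to M\to C\to L\to 0$, so $C/M\cong L$ and $M\subseteq Z(C)$. It remains to check $M\subseteq C'$; since $L$ is perfect, the computation $L' = \ima(F'/(\FR))'$ in the paragraph before the theorem shows that the induced map $\pi|_{C'}:C'\to L$ is surjective, hence $C' + M = C$, and because $C/M \cong L = L'$ is perfect one gets $C = C' + M$ with $M$ central, forcing $C = C'$; combined with $M\subseteq C$ this yields $M\subseteq C' = C$. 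Thus $(C,M)$ is a defining pair of $L$, and in fact $C$ is perfect.

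Next I would establish maximality. Let $(K,N)$ be any defining pair of $L$, so $K/N\cong L$ with $N\subseteq Z(K)\cap K'$, giving a central extension $0\to N\to K\xrightarrow{\sigma} L\to 0$. By the universal property of the free presentation, the surjection $F\twoheadrightarrow L$ lifts through $\sigma$ to a homomorphism $\theta:F\to K$ with $\sigma\circ\theta$ equal to the projection $F\to L$; consequently $\theta(R)\subseteq N$. Since $N$ is central in $K$, the bracket $\FR$ maps to $0$ under $\theta$ — indeed $\theta(F\lozenge R + R\lozenge F) = \theta(F)\lozenge\theta(R) + \theta(R)\lozenge\theta(F) \subseteq K\lozenge N + N\lozenge K = 0$ — so $\theta$ factors through a homomorphism $\bar\theta: F/(\FR)\to K$. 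Restricting to derived subalgebras, $\bar\theta(F'/(\FR)) \subseteq K'$, and since $K = K'$ would not be automatic I would instead argue as follows: $\bar\theta$ maps $F'/(\FR)$ onto a subalgebra containing $\sigma^{-1}(L')\cap K' = K'$... more carefully, because $\sigma\circ\bar\theta$ is the projection and $L = L'$, the image $\bar\theta(F'/(\FR))$ surjects onto $L$, and it lies in $K'$; since $K/N\cong L = L'$ we have $K = K' + N$ with $N$ central, whence $K = K'$, so $\bar\theta$ restricts to a surjection $C = F'/(\FR) \twoheadrightarrow K'= K$. Therefore $\dim K \le \dim C$, and $C$ has maximal dimension among defining pairs, i.e. $C$ is a cover of $L$.

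The main obstacle I anticipate is the bookkeeping around the map $\FR \mapsto 0$ and making sure the factored map $\bar\theta$ genuinely lands in $K'$ and surjects onto all of $K$ rather than a proper subalgebra; this hinges on repeatedly using that $L$ is perfect (so that $K = K' + N = K'$ and the image of $C$ cannot shrink). A secondary technical point is confirming that the lift $\theta:F\to K$ exists with the stated compatibility — this is the standard lifting property of a free object against a surjection, applied to the composite $F\to L$ and $\sigma:K\to L$, and I would cite or briefly recall it. Once these are in place, the dimension count $\dim(F'/(\FR)) = \dim C \ge \dim K$ for every defining pair $(K,N)$ closes the argument, and the isomorphism $M(L)\cong (F'\cap R)/(\FR)$ identifies the multiplier piece, so $(C, M)$ is in fact a maximal defining pair.
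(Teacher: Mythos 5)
Your overall architecture --- verify that $(C,M)$ with $C=F'/(\FR)$ and $M=(F'\cap R)/(\FR)$ is a defining pair, then prove maximality by lifting the free presentation through an arbitrary defining pair $(K,N)$ --- is sound, and the maximality half works: there you may legitimately conclude $K=K'+N=K'$ because the definition of a defining pair already supplies $N\subseteq K'$. The gap is in the first half, at precisely the point the paper's proof is devoted to. From $\pi(C')=L$ you correctly get $C=C'+M$ with $M$ central, but the inference ``$C=C'+M$ with $M$ central and $C/M$ perfect forces $C=C'$'' is false in general: take $C=S\oplus\F$ with $S$ perfect and $M=\F$ a one-dimensional central ideal; then $C/M\cong S$ is perfect and $C=C'+M$, yet $C'=S\neq C$. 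All that follows from $C=C'+M$ with $M$ central is $C'=(C'+M)\lozenge(C'+M)=C''$. The containment $M\subseteq C'$ is exactly what you are trying to prove, so it cannot be fed back in the way it is for $(K,N)$; your argument is circular at this step.

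The repair requires descending to $F$, which is what the paper does. Since $L=L'$, the presentation gives $F=F'+R$, hence
\[F'=F\lozenge F=(F'+R)\lozenge(F'+R)\subseteq F''+\FR,\]
so $F''+\FR=F'$ and therefore $C'=(F''+\FR)/(\FR)=F'/(\FR)=C$; in particular $M\subseteq C'$. With that inserted, your defining-pair verification closes, and your maximality argument (or, as the paper implicitly uses, the fact from the cited reference that $\dim M(L)$ bounds the dimension of the second component of any defining pair, so a defining pair realizing $M\cong M(L)$ is automatically maximal) finishes the theorem.
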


\begin{proof}
Since $M(L)\cong (F'\cap R)/(\FR)$ and \[L\cong \frac{F'/(\FR)}{(F'\cap R)/(\FR)},\] it remains to prove that \[\frac{F'\cap R}{\FR} \subseteq Z\Big(\frac{F'}{\FR}\Big)\cap \Big(\frac{F'}{\FR}\Big)'.\] Clearly \[\frac{F'\cap R}{\FR}\subseteq Z\Big(\frac{F'}{\FR}\Big)\] and \[\frac{F'\cap R}{\FR} \subseteq \frac{F'}{\FR}.\] It thus suffices to show that \[\Big(\frac{F'}{\FR}\Big)' = \frac{F'}{\FR}.\] We first note that \[\Big(\frac{F'}{\FR}\Big)' = \frac{F''+\FR}{\FR}\] and that $F''+\FR \subseteq F'$. Since $L=L'$, we know $F/R = (F/R)' = \frac{F'+R}{R}$. This implies that for all $x_i\in F$, $x_i=y_i+r_i$ for some $y_i\in F'$ and $r_i\in R$. Therefore \begin{align*}
    x_1\dashv x_2 &= y_1\dashv y_2 + y_1\dashv r_2 + r_1\dashv y_2 + r_1\dashv r_2
\end{align*} and \begin{align*}
    x_1\vdash x_2 &= y_1\vdash y_2 + y_1\vdash r_2 + r_1\vdash y_2 + r_1\vdash r_2,
\end{align*} both of which are elements of $F''+\FR$. This implies that $F'\subseteq F''+\FR$, and thus $F'/(\FR)$ is a cover of $L$.
\end{proof}

\begin{cor}
Let $L$ be a finite-dimensional perfect diassociative algebra with free presentation $0\xrightarrow{} R\xrightarrow{} F\xrightarrow{} L\xrightarrow{} 0$. Then $F'/(\FR)$ is perfect.
\end{cor}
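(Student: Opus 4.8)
The plan is to read this corollary straight off the proof of Theorem~\ref{batten 6.8}. Recall that in that proof we recorded the identity
\[\Big(\frac{F'}{\FR}\Big)' = \frac{F''+\FR}{\FR},\]
and then, using $L=L'$, we wrote each element $x\in F$ as $x=y+r$ with $y\in F'$ and $r\in R$ in order to conclude that both $x_1\dashv x_2$ and $x_1\vdash x_2$ lie in $F''+\FR$; this gives the inclusion $F'\subseteq F''+\FR$. Since the reverse inclusion $F''+\FR\subseteq F'$ is immediate, we obtain $F''+\FR=F'$, and hence $\big(F'/(\FR)\big)'=F'/(\FR)$, i.e. $F'/(\FR)$ is perfect.

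Concretely, the steps I would carry out, in order, are: first restate the formula for the derived subalgebra of $F'/(\FR)$; then quote the inclusion $F'\subseteq F''+\FR$ already established inside the proof of Theorem~\ref{batten 6.8}; and finally combine it with the trivial inclusion $F''+\FR\subseteq F'$ to get equality, which is the assertion. No new computation is needed.

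There is essentially no obstacle here: the corollary is a repackaging of the last inclusion in the proof of Theorem~\ref{batten 6.8}, isolated as a standalone statement because perfectness of the cover is exactly the property required to apply the uniqueness result of Lemma~\ref{batten 6.3} and the universality arguments that build on it. If one prefers a more conceptual route, then once the central extension $0\xrightarrow{} M(L)\xrightarrow{} F'/(\FR)\xrightarrow{} L\xrightarrow{} 0$ has been shown to be universal, Lemma~\ref{batten 6.2} immediately forces $F'/(\FR)$ to be perfect; but the direct argument above is self-contained and does not depend on the universality statement, so I would present that one.
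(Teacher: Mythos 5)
Your proof is correct and is exactly the paper's argument: the corollary is read directly off the proof of Theorem~\ref{batten 6.8}, where the equality $\big(F'/(\FR)\big)' = F'/(\FR)$ was already established via $F'\subseteq F''+\FR$. Nothing further is needed.
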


\begin{proof}
By the proof of Theorem \ref{batten 6.8}, we have $\big(F'/(\FR)\big)' = F'/(\FR)$.
\end{proof}

\begin{thm}\label{batten 6.10}
Let $L$ be a finite-dimensional perfect diassociative algebra with free presentation $0\xrightarrow{} R\xrightarrow{} F\xrightarrow{} L\xrightarrow{} 0$. Then \[E:0\xrightarrow{} \frac{F'\cap R}{\FR} \xrightarrow{} \frac{F'}{\FR} \xrightarrow{} L\xrightarrow{} 0\] is universal.
\end{thm}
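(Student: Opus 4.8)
The goal is to show that the central extension $E:0\to (F'\cap R)/(\FR)\to F'/(\FR)\to L\to 0$ is universal, i.e.\ that it uniquely covers every central extension of $L$. By Theorem \ref{batten 6.8} the middle term $C := F'/(\FR)$ is a cover of $L$, hence perfect by the Corollary. Since $C$ is perfect, Lemma \ref{batten 6.3} tells us that \emph{uniqueness} of the covering map is automatic once we know $E$ covers the given extension; therefore the entire problem reduces to showing that $E$ \emph{covers} an arbitrary central extension $E_1:0\to A\to H\xrightarrow{\phi_1} L\to 0$.

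The plan is to build the covering homomorphism $\tau:F'/(\FR)\to H$ by lifting through the free algebra $F$. First I would choose a linear section $\mu:L\to H$ of $\phi_1$, and, using freeness of $F$ together with the surjection $F\to L$, produce an algebra homomorphism $\tilde\tau:F\to H$ such that $\phi_1\circ\tilde\tau$ equals the quotient map $F\to L$. (Concretely: pick a basis of the free generators of $F$, send each to a $\mu$-lift of its image in $L$, and extend multiplicatively; this is exactly the universal property of the free diassociative algebra.) Then $R = \ker(F\to L)$ is carried by $\tilde\tau$ into $\ker\phi_1 = A \subseteq Z(H)$. Consequently $\tilde\tau(\FR) = \tilde\tau(F\lozenge R + R\lozenge F)$ lands in $H\lozenge Z(H) + Z(H)\lozenge H = 0$, so $\FR\subseteq\ker\tilde\tau$, and $\tilde\tau$ descends to a homomorphism $\bar\tau: F/(\FR)\to H$. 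Restricting to $F'/(\FR)$ gives the desired $\tau$, and by construction $\phi_1\circ\tau$ agrees with the map $F'/(\FR)\to L$ from the statement of the theorem (both are induced by $F\to L$). Finally, $\tau$ sends $(F'\cap R)/(\FR)$ into $R$'s image, which lies in $A$; combined with commutativity on the quotient $L$, this shows the diagram defining "covers" commutes, with the left vertical map being $\tau$ restricted to $M(L)$.

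I expect the main obstacle — or at least the step requiring the most care — to be the construction of the lift $\tilde\tau:F\to H$ as an \emph{algebra} homomorphism (not merely linear) and verifying $\phi_1\circ\tilde\tau$ equals the canonical projection; this is where the universal property of free diassociative algebras is doing real work, and one must be careful that "section of $\phi_1$" need only be linear while the lift must respect both products $\dashv,\vdash$. The remaining verifications — that $\FR$ and $F'\cap R$ map into the center $A$, and that the resulting square commutes — are routine once the lift is in hand. As a cleaner alternative, one could instead quote the earlier machinery directly: Lemma \ref{batten 6.4} and Lemma \ref{batten 6.7} together with Theorem \ref{batten 6.8} show that a universal extension of $L$ exists and is a cover; then Lemma \ref{batten 6.1} identifies it up to isomorphism with $E$. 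I would present the direct lifting argument as the primary proof since it is self-contained, and remark on the spectral-sequence-free shortcut only if space permits.
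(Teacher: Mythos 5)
Your proposal is correct and follows essentially the same route as the paper: show that $E$ covers an arbitrary central extension $E_1$ of $L$, then use perfectness of $F'/(\FR)$ (Theorem \ref{batten 6.8}) together with Lemma \ref{batten 6.3} to upgrade ``covers'' to ``uniquely covers.'' The only difference is that the paper obtains the covering homomorphism by citing Lemma 3.3 of \cite{mainellis batten di} (which produces the map $F/(\FR)\to H$ and then restricts to $F'/(\FR)$), whereas you re-derive that lemma directly via the universal property of the free diassociative algebra; the content is the same.
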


\begin{proof}
Let $E_1:0\xrightarrow{} A\xrightarrow{} H\xrightarrow{} L\xrightarrow{} 0$ be a central extension of $L$. By Lemma 3.3 of \cite{mainellis batten di}, it is covered by a natural exact sequence, call it $E_2$, making the diagram \[\begin{tikzcd}
E_2:0\arrow[r] &\frac{R}{\FR}\arrow[r] \arrow[,d] & \frac{F}{\FR} \arrow[d,swap,"\beta"] \arrow[r] & L\arrow[r] \arrow["\text{id}",d]&0\\
E_1:0\arrow[r] & A\arrow[r,swap] & H \arrow[r,swap] &L\arrow[r] &0
\end{tikzcd}\] commute. Since \[\begin{tikzcd}
E:0\arrow[r] &\frac{F'\cap R}{\FR}\arrow[r] \arrow[,d] & \frac{F'}{\FR} \arrow[d,swap,"\theta"] \arrow[r] & L\arrow[r] \arrow["\text{id}",d]&0\\
E_1:0\arrow[r] & A\arrow[r,swap] & H \arrow[r,swap] &L\arrow[r] &0
\end{tikzcd}\] commutes, where $\theta = \beta|_{F'/(\FR)}$, $E_1$ is covered by $E$. Since $F'/(\FR)$ is perfect and $E$ covers $E_1$, Lemma \ref{batten 6.3} implies that $E$ uniquely covers $E_1$. Thus $E$ is universal.
\end{proof}

\begin{thm}\label{batten 6.11}
If $0\xrightarrow{} A\xrightarrow{} L^*\xrightarrow{} L\xrightarrow{} 0$ is a universal central extension and $L$ is perfect, then $A\cong M(L)$ and $L^*$ is a cover $L$.
\end{thm}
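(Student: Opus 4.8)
The plan is to leverage the uniqueness results already established for universal central extensions. The key observation is that Theorem \ref{batten 6.10} produces an \emph{explicit} universal central extension, namely $E:0\xrightarrow{} \frac{F'\cap R}{\FR}\xrightarrow{} \frac{F'}{\FR}\xrightarrow{} L\xrightarrow{} 0$ built from a free presentation of $L$, in which the kernel is precisely $M(L)$ and the middle term $F'/(\FR)$ is a cover of $L$ (Theorem \ref{batten 6.8}). So I would start by fixing such a free presentation $0\xrightarrow{} R\xrightarrow{} F\xrightarrow{} L\xrightarrow{} 0$ and writing down this universal central extension $E$ alongside the given universal central extension $E^*:0\xrightarrow{} A\xrightarrow{} L^*\xrightarrow{} L\xrightarrow{} 0$.

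Next I would invoke Lemma \ref{batten 6.1}: since both $E$ and $E^*$ are universal central extensions of $L$, there is an isomorphism $\tau:L^*\xrightarrow{} F'/(\FR)$ carrying $A$ onto $(F'\cap R)/(\FR)$. This immediately gives $A\cong (F'\cap R)/(\FR)\cong M(L)$, establishing the first assertion. For the second assertion, I would argue that $\tau$ restricts to an isomorphism $L^*/A\xrightarrow{} \big(F'/(\FR)\big)\big/\big((F'\cap R)/(\FR)\big)$ compatible with the projections to $L$, so the defining-pair data transports across $\tau$. Concretely, $\big(A,L^*\big)$ is a defining pair of $L$ — one has $L^*/A\cong L$ from exactness of $E^*$, and $A\subseteq Z(L^*)$ since $E^*$ is central, while $A\subseteq (L^*)'$ follows because $L^*$ is perfect by Lemma \ref{batten 6.2}, hence $A = A\cap (L^*)'\subseteq (L^*)'$. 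Since $\tau$ is an isomorphism carrying $A$ onto $M(L)$ inside the known cover $F'/(\FR)$, and $\dim M(L)$ is the dimension occurring in a maximal defining pair, $\dim A = \dim M(L)$ forces $(A,L^*)$ to be maximal; therefore $L^*$ is a cover of $L$.

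Alternatively, and perhaps more cleanly, I would avoid the dimension count by transporting the cover structure directly: because $\tau:L^*\xrightarrow{} F'/(\FR)$ is an isomorphism and $F'/(\FR)$ is a cover of $L$ via the extension $E$, and because $\tau$ intertwines the two extensions (it carries $A$ onto $(F'\cap R)/(\FR)$ and is compatible with the maps to $L$), the pair $(A,L^*)$ is isomorphic as a defining pair to the maximal defining pair $\big((F'\cap R)/(\FR),\, F'/(\FR)\big)$, and being a cover is preserved under such isomorphism. The main obstacle I anticipate is bookkeeping rather than conceptual: one must check that $\tau$ genuinely respects the projections down to $L$ (this is part of Lemma \ref{batten 6.1}'s conclusion that the diagram commutes, so it should come for free) and that "cover" is an isomorphism-invariant notion of defining pairs — a small lemma that is essentially definitional. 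No delicate computation with the diassociative identities should be needed here, since all the heavy lifting was done in Lemmas \ref{batten 6.1}, \ref{batten 6.2} and Theorems \ref{batten 6.8}, \ref{batten 6.10}.
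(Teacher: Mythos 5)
Your proposal is correct and follows essentially the same route as the paper: invoke the universality of the explicit extension from Theorem \ref{batten 6.10}, apply Lemma \ref{batten 6.1} to get an isomorphism $L^*\to F'/(\FR)$ carrying $A$ onto $(F'\cap R)/(\FR)\cong M(L)$, and transport the cover structure across that isomorphism. In fact you supply more justification for the final step (that being a cover is preserved under an isomorphism of defining pairs) than the paper, which leaves it implicit.
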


\begin{proof}
We know that \[0\xrightarrow{} \frac{F'\cap R}{\FR} \xrightarrow{} \frac{F'}{\FR} \xrightarrow{} L\xrightarrow{} 0\] is universal. By Lemma \ref{batten 6.1}, there exists an isomorphism \[L^*\xrightarrow{} \frac{F'}{\FR}\] which carries $A$ onto \[\frac{F'\cap R}{\FR}\cong M(L).\] Thus $A\cong M(L)$ and $L^*$ is a cover of $L$.
\end{proof}

\begin{thm}\label{batten 6.12}
Let $L$ be a finite-dimensional perfect diassociative algebra and let $M(L)=0$. Then $\cH^2(L,A)=0$ for any central module $A$ of $L$.
\end{thm}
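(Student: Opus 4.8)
The plan is to use the free presentation $0\to R\to F\to L\to 0$ together with the standard cohomological description of $\cH^2$ and the hypothesis $M(L)=0$ to force every central extension of $L$ to split. Recall from the discussion preceding Theorem \ref{batten 6.8} that we have the central extension
\[
0\xrightarrow{} \frac{F'\cap R}{\FR}\xrightarrow{} \frac{F'}{\FR}\xrightarrow{} L\xrightarrow{} 0,
\]
and that $(F'\cap R)/(\FR)\cong M(L)$. If $M(L)=0$, then $F'\cap R\subseteq \FR$, hence $F'\cap R=\FR$ (the reverse inclusion being automatic since $\FR\subseteq R$ and $\FR\subseteq F'$), so the extension above collapses to an isomorphism $F'/(\FR)\cong L$. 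By Theorem \ref{batten 6.10} this (now trivial) extension is universal, so by Lemma \ref{batten 6.7}, applied with $H=F'/(\FR)$, the extension $0\to 0\to L\xrightarrow{\mathrm{id}} L\to 0$ is universal. Then Lemma \ref{batten 6.4} tells us that every central extension of $L$ splits.

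From here I would translate "every central extension splits" into the vanishing of $\cH^2$. Given a central module $A$ of $L$ and a 2-cocycle representing a class in $\cH^2(L,A)$, form the associated central extension $0\to A\to H\to L\to 0$ in the usual way (the extension built from the cocycle on $L\oplus A$). By the previous paragraph this extension splits, i.e. admits a homomorphic section $\mu:L\to H$. A splitting homomorphism means the cocycle is a coboundary, so the cohomology class is zero. Since the class was arbitrary, $\cH^2(L,A)=0$.

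The one point that needs care is the precise correspondence between $\cH^2(L,A)$ and equivalence classes of central extensions of $L$ by $A$, and in particular that the zero class corresponds exactly to split extensions. For diassociative algebras this low-dimensional cohomology and its extension-theoretic interpretation are exactly what is developed in \cite{mainellis batten di}; one must check that "central module" in the sense used there matches the hypothesis here and that a 2-cocycle with a homomorphic section is a 2-coboundary. This is the main (though routine) obstacle — everything else is a direct chain of the lemmas already proved. I would therefore state the cohomology--extension dictionary as a citation to \cite{mainellis batten di}, verify the split$\Leftrightarrow$trivial-class direction explicitly, and conclude.
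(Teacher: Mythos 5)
Your proposal follows exactly the paper's argument: since $M(L)=0$, Theorem \ref{batten 6.10} makes $0\to 0\to F'/(\FR)\to L\to 0$ universal, Lemma \ref{batten 6.7} transfers universality to $0\to 0\to L\to L\to 0$, Lemma \ref{batten 6.4} gives that every central extension splits, and the cohomology--extension correspondence from \cite{mainellis batten di} yields $\cH^2(L,A)=0$. Your extra care in spelling out the last dictionary step (split extension $\Leftrightarrow$ trivial class) is a reasonable addition but does not change the route; the proof is correct and essentially identical to the paper's.
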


\begin{proof}
Since $M(L)=0$, Theorem \ref{batten 6.10} implies that the extension \[0\xrightarrow{} 0\xrightarrow{} \frac{F'}{\FR}\xrightarrow{} L\xrightarrow{} 0\] is universal. By Lemma \ref{batten 6.7}, the extension $0\xrightarrow{} 0\xrightarrow{} L\xrightarrow{} L\xrightarrow{} 0$ is also universal. By Lemma \ref{batten 6.4}, every central extension of $L$ splits, and thus $\cH^2(L,A) = 0$.
\end{proof}

\begin{thm}
Let $L$ be a finite-dimensional perfect diassociative algebra and let $M(L)=0$. If $Z$ is a central ideal of $L$, then $Z\cong M(L/Z)$ and $L$ is a cover of $L/Z$.
\end{thm}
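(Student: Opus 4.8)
The plan is to exhibit the central extension $0 \to Z \to L \xrightarrow{\rho} L/Z \to 0$ (where $\rho$ is the natural projection) and show it is universal; then the conclusion $Z \cong M(L/Z)$ and ``$L$ is a cover of $L/Z$'' follows immediately from Theorem \ref{batten 6.11}, provided we also observe that $L/Z$ is perfect. So the work splits into two verifications: that $L/Z$ is perfect, and that the displayed extension is universal.

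First I would note that since $L$ is perfect, $L/Z = (L/Z)'$ because the homomorphic image of a perfect algebra is perfect (this is immediate: $\rho(L) = \rho(L') = \rho(L)'$). Next, the extension $0 \to Z \to L \xrightarrow{\rho} L/Z \to 0$ is central by the hypothesis that $Z$ is a central ideal. So it remains to show this extension is universal. By Theorem \ref{batten 6.12}, the hypothesis $M(L) = 0$ (together with $L$ perfect) gives that every central extension of $L$ splits; equivalently, by Lemma \ref{batten 6.4}, the extension $0 \to 0 \to L \xrightarrow{\text{id}} L \to 0$ is universal.

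The key step is then to transport universality along the quotient map. Here I would apply Lemma \ref{batten 6.6} and the lemma following it (the one stating that if $E_1$ is universal then so is $E_3$). Take $E_1$ to be the universal extension $0 \to 0 \to L \xrightarrow{\text{id}} L \to 0$ and $E_2$ to be the central extension $0 \to Z \to L \xrightarrow{\rho} L/Z \to 0$; then $G = L$ in the notation there, which is perfect, and $\pi = \rho \circ \text{id} = \rho$, $A = \ker \pi = Z$, so $E_3$ is exactly our extension $0 \to Z \to L \xrightarrow{\rho} L/Z \to 0$. Lemma \ref{batten 6.6} confirms it is central (which we already knew), and the subsequent lemma yields that $E_3$ is universal since $E_1$ is. With universality of $0 \to Z \to L \xrightarrow{\rho} L/Z \to 0$ in hand and $L/Z$ perfect, Theorem \ref{batten 6.11} gives $Z \cong M(L/Z)$ and that $L$ is a cover of $L/Z$.

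The main obstacle is a bookkeeping one: making sure the hypotheses of Lemma \ref{batten 6.6} and its successor are met with the correct choice of $E_1, E_2$, in particular that $G$ is perfect (it is, being $L$ itself) and that $E_2$ is genuinely a central extension of $L/Z$ with kernel $Z$. There is no hard computation; the content is entirely in correctly identifying which earlier results compose to give the result, and in the small but necessary remark that a quotient of a perfect algebra is perfect so that Theorem \ref{batten 6.11} applies to $L/Z$.
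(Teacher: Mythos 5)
Your proposal is correct and follows essentially the same route as the paper: use $M(L)=0$ to get universality of $0\to 0\to L\to L\to 0$, transport it to $0\to Z\to L\to L/Z\to 0$ via the composition-of-central-extensions lemmas, and conclude with Theorem \ref{batten 6.11}. You are in fact slightly more precise than the paper in citing the unnamed lemma following Lemma \ref{batten 6.6} (rather than Lemma \ref{batten 6.6} itself) for the universality transfer, and in noting explicitly that $L/Z$ is perfect.
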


\begin{proof}
By the proof of Theorem \ref{batten 6.12}, the extension $0\xrightarrow{} 0\xrightarrow{} L\xrightarrow{} L\xrightarrow{} 0$ is universal. Since $L$ is perfect, Lemma \ref{batten 6.6} implies that $0\xrightarrow{} Z\xrightarrow{} L\xrightarrow{} L/Z\xrightarrow{} 0$ is also universal. Thus, by Theorem \ref{batten 6.11}, $M(L/Z)\cong Z$, which implies that $L$ is the cover of $L/Z$.
\end{proof}

\begin{thm}
Let $L$ be a finite-dimensional perfect diassociative algebra and $C$ be a cover of $L$. Then $C=C'$ and $M(C) = 0$.
\end{thm}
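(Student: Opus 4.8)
The plan is to show that a cover $C$ of a perfect diassociative algebra $L$ is itself perfect, and then leverage the universality machinery from the previous section to conclude $M(C)=0$. First I would argue that $C$ is perfect. By definition of a cover, there is a central ideal $M\cong M(L)$ with $M\subseteq Z(C)\cap C'$ and $C/M\cong L$. Since $L=L'$, the argument already used in Theorem~\ref{batten 6.8} applies: $C/M = (C/M)' = (C'+M)/M$, so $C = C' + M$; but $M\subseteq C'$, hence $C=C'$. This is the easy half and essentially repeats a computation the paper has done twice already.

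For the second half, the idea is to show that the defining extension $E_C:0\xrightarrow{} M\xrightarrow{} C\xrightarrow{} L\xrightarrow{} 0$ is a universal central extension. By Theorem~\ref{batten 6.10}, the extension $0\xrightarrow{} (F'\cap R)/(\FR)\xrightarrow{} F'/(\FR)\xrightarrow{} L\xrightarrow{} 0$ is universal, and by Theorem~\ref{batten 6.8} the middle term $F'/(\FR)$ is a cover of $L$ of the same dimension as $C$ (since both have multiplier kernel $\cong M(L)$ and quotient $\cong L$, all covers of a finite-dimensional $L$ have equal dimension $\dim L + \dim M(L)$). I would then want to invoke the extended Hochschild–Serre type spectral sequence from \cite{mainellis batten di}, as the theorem statement's surrounding text promises, to compare $C$ with the distinguished cover $F'/(\FR)$: the key point is that any two covers of a perfect algebra are isomorphic via an isomorphism carrying one multiplier copy onto the other. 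The cleanest route is: since $F'/(\FR)$ is perfect, its defining extension is universal (Theorem~\ref{batten 6.10}); one shows $E_C$ also covers every central extension of $L$ — here is where the spectral sequence enters, giving that the inflation–restriction sequence forces the covering map $C\to H$ to exist for an arbitrary central extension $H$ of $L$ — and then, $C$ being perfect, Lemma~\ref{batten 6.3} upgrades ``covers'' to ``uniquely covers,'' so $E_C$ is universal.

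Once $E_C$ is known to be universal, Theorem~\ref{batten 6.11} (applied with $L^*=C$) is not quite what is needed; instead I would apply Lemma~\ref{batten 6.7}: since $0\xrightarrow{}M\xrightarrow{}C\xrightarrow{}L\xrightarrow{}0$ is universal, composing appropriately one sees that the universal extension of $C$ is $0\xrightarrow{}0\xrightarrow{}C\xrightarrow{\text{id}}C\xrightarrow{}0$. More directly: apply Lemma~\ref{batten 6.1} to see $C\cong F'/(\FR)$ carrying $M$ onto $(F'\cap R)/(\FR)$; then by Theorem~\ref{batten 6.12} (or Lemma~\ref{batten 6.4} plus \ref{batten 6.7}) it suffices to know $\cH^2(C,\F)=0$, i.e. $M(C)=0$. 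The slick way is: $E_C$ universal and $C$ perfect means, by Lemma~\ref{batten 6.7}, that $0\xrightarrow{}0\xrightarrow{}C\xrightarrow{}C\xrightarrow{}0$ is universal; by Lemma~\ref{batten 6.4} every central extension of $C$ splits; hence $\cH^2(C,A)=0$ for all central $A$, and in particular $M(C)\cong \cH^2(C,\F)=0$.

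The main obstacle I anticipate is establishing that $E_C$ is universal — equivalently, that an arbitrary cover $C$ is isomorphic to the distinguished cover $F'/(\FR)$ compatibly with the multiplier subalgebras. The dimension count shows $\dim C = \dim F'/(\FR)$, but producing the comparison map requires more than dimension: one must show that the natural surjection $F/(\FR)\twoheadrightarrow C$ (coming from lifting $F\to L$ through $C\to L$, which exists since $F$ is free) restricts to an isomorphism $F'/(\FR)\to C$. Surjectivity follows from $C=C'$; injectivity is exactly where the extended spectral sequence argument of \cite{mainellis batten di} — or a direct kernel computation using $M\subseteq Z(C)\cap C'$ and $\dim$ equality — must be deployed. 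I expect this comparison-of-covers step, rather than the two bookkeeping arguments around it, to be the technical heart of the proof.
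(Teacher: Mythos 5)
Your first half ($C=C'$) is exactly the paper's argument. The second half diverges, and it has a genuine gap at its final step. Your plan is: (i) show the defining extension $E_C:0\to M\to C\to L\to 0$ is universal by identifying $C$ with $F'/(\FR)$, then (ii) deduce $M(C)=0$ from universality. Step (i) can indeed be completed as you sketch — lift $F\to L$ through $C\to L$ using freeness, note $R$ lands in $M\subseteq Z(C)$ so $\FR$ dies, get a surjection $F'/(\FR)\to C'=C$, and conclude injectivity from $\dim C=\dim L+\dim M(L)=\dim F'/(\FR)$; no spectral sequence is needed there, and your suggestion that the inflation--restriction sequence ``forces the covering map to exist'' is not how that sequence works (the covering maps come from the free presentation, as in Theorem \ref{batten 6.10}). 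The real problem is step (ii): you invoke Lemma \ref{batten 6.7} to pass from ``$E_C$ is universal'' to ``$0\to 0\to C\to C\to 0$ is universal,'' but that lemma's hypothesis is a universal extension with \emph{zero} kernel, $0\to 0\to H\to L\to 0$, whereas $E_C$ has kernel $M\cong M(L)$, which is generally nonzero. The statement you actually need — that the total space of a universal central extension is centrally closed, i.e.\ admits no nonsplit central extensions — is true, but it is not among the paper's lemmas, is not implied by ``composing appropriately'' via Lemma \ref{batten 6.6} (which pushes a universal extension down to a quotient, not up to an extension of $C$), and is essentially equivalent to the conclusion $M(C)=0$ you are trying to prove. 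As written, your argument is circular at this point.

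The paper closes the gap by an entirely different, computational route: it applies the six-term exact sequence
\[0\to \Hom(L,\F)\to \Hom(C,\F)\to \Hom(A,\F)\xrightarrow{\Tra} M(L)\xrightarrow{\Inf_2} M(C)\xrightarrow{\delta} (C/C'\otimes A\oplus A\otimes C/C')^2\]
to the defining extension with $A=M(L)$. Since $C=C'$ the last term vanishes, so $M(C)=\ima(\Inf_2)$; since $C$ is perfect, $\Hom(C,\F)=0$, so $\Tra$ is injective and $\ima(\Tra)\cong\Hom(A,\F)\cong A\cong M(L)$; exactness at $M(L)$ then gives $M(C)\cong M(L)/\ima(\Tra)=0$. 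If you want to salvage your route, you would need to prove the centrally-closed property of universal central extensions separately (e.g.\ by showing that for a central extension $0\to B\to K\to C\to 0$, the perfect subalgebra $K'$ still surjects onto $C$ and that $K'\to C\to L$ is central via Lemma \ref{batten 6.6}, then using universality of $E_C$ to split it); that is a legitimate alternative proof, but it is additional work your proposal does not supply.
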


\begin{proof}
Let $A=M(L)$. Then $L\cong C/A$ and $A\subseteq Z(C)\cap C'$. One computes \[L/L'\cong \frac{C/A}{(C/A)'} \cong \frac{C/A}{(C'+A)/A} \cong \frac{C}{C'+A}.\] Since $L=L'$ and $A\subseteq C'$, we have $C=C'+A = C'$. Thus $C$ is perfect. We now invoke an extended Hochschild-Serre type spectral sequence that was obtained in \cite{mainellis batten di}. Letting $\F$ denote our ground field, the sequence \[0\xrightarrow{} \Hom(L,\F)\xrightarrow{\Inf_1} \Hom(C,\F)\xrightarrow{\Res} \Hom(A,\F)\xrightarrow{\Tra} M(L)\xrightarrow{\Inf_2} M(C) \xrightarrow{\delta} (C/C'\otimes A\oplus A\otimes C/C')^2\] is exact. Here, the term $(C/C'\otimes A\oplus A\otimes C/C')^2$ must be zero since $C=C'$, which yields $M(C) = \ker \delta = \ima(\Inf_2)$. Next, we also know that $\Hom(C,\F)=0$ since $C$ is perfect. This implies that $0=\ima(\Res) = \ker(\Tra)$. Then $\ima(\Tra) \cong \Hom(A,\F) \cong A\cong M(L)$ and therefore $M(C) = \ima(\Inf_2) \cong M(L)/\ker(\Inf_2) = M(L)/\ima(\Tra) = M(L)/M(L) \cong 0$.
\end{proof}

\section*{Acknowledgements}
The author would like to thank Ernest Stitzinger for the many helpful discussions.

\end{document}